\numberwithin{equation}{section}
\numberwithin{figure}{section}
\theoremstyle{definition}
\newtheorem*{problem*}{\protect\problemname}
\theoremstyle{plain}
\newtheorem{thm}{\protect\theoremname}[section]
\theoremstyle{definition}
\newtheorem{defn}[thm]{\protect\definitionname}
\theoremstyle{plain}
\newtheorem{lem}[thm]{\protect\lemmaname}
\newtheorem{cor}[thm]{\protect\corollaryname}
\theoremstyle{remark}
\newtheorem{rem}[thm]{\protect\remarkname}
\theoremstyle{definition}
\newtheorem{example}[thm]{\protect\examplename}
\providecommand{\corollaryname}{Corollary}
\providecommand{\definitionname}{Definition}
\providecommand{\examplename}{Example}
\providecommand{\lemmaname}{Lemma}
\providecommand{\problemname}{Problem}
\providecommand{\remarkname}{Remark}
\providecommand{\theoremname}{Theorem}
\begin{document}
\subjclass[2020]{Primary: 47A20; secondary: 44A60, 46E22, 47A57, 47B32.}
\title{Extensions of Operator-Valued Kernels on $\mathbb{F}^{+}_{d}$ }
\begin{abstract}
We study the problem of extending a positive definite operator-valued
kernel, defined on words of a fixed finite length from a free semigroup,
to a global kernel defined on all words. We show that if the initial
kernel satisfies a one-step dominance inequality on its interior,
a global extension that preserves this interior data and the dominance
property is always possible. For the problem of matching the kernel
on the boundary, we introduce a shift-consistency condition. We prove
this condition is sufficient to guarantee the existence of a global
extension that agrees with the original kernel on its entire domain.
\end{abstract}

\author{James Tian}
\address{Mathematical Reviews, 535 W. William St, Suite 210, Ann Arbor, MI
48103, USA}
\email{james.ftian@gmail.com}
\keywords{positive definite kernels; free semigroups; dilation; Cuntz-Toeplitz;
kernel extension; moment problem}

\maketitle
\tableofcontents{}

\section{Introduction}\label{sec:1}

This paper begins with a simple extension question. Suppose that we
are given a positive definite kernel on the words of length at most
$N$ in a free semigroup. When can this finite piece be continued
to a positive definite kernel on all words?

There is one additional feature that we want the extension to preserve.
Let $\mathbb{F}^{+}_{d}$ be the free semigroup on $d$ generators,
and write $\Lambda_{N}=\left\{ \alpha\in\mathbb{F}^{+}_{d}:\left|\alpha\right|\leq N\right\} $.
Given a kernel $K:\Lambda_{N}\times\Lambda_{N}\rightarrow L\left(H\right)$,
define $K_{\Sigma}\left(\alpha,\beta\right)=\sum^{d}_{i=1}K\left(\alpha i,\beta i\right)$.
We assume that $K$ is positive definite and that $K_{\Sigma}\leq K$
on $\Lambda_{N-1}\times\Lambda_{N-1}$. The question is whether there
is a global positive definite kernel $\widetilde{K}:\mathbb{F}^{+}_{d}\times\mathbb{F}^{+}_{d}\rightarrow L\left(H\right)$
that still satisfies $\widetilde{K}_{\Sigma}\leq\widetilde{K}$. 

Initially one might simply ask that $\widetilde{K}$ agree with $K$
everywhere on $\Lambda_{N}\times\Lambda_{N}$. But it is useful to
separate two questions. 

(1) Can we preserve the data on the interior $\Lambda_{N-1}\times\Lambda_{N-1}$?

(2) And can we also preserve the entries involving words of length
$N$?

The first question has a clean answer. The one-step inequality $K_{\Sigma}\leq K$
is enough. In \prettyref{thm:3-1}, we show that there is always a
global positive definite kernel $\widetilde{K}$ such that 
\[
\widetilde{K}\left(\alpha,\beta\right)=K\left(\alpha,\beta\right),\quad\alpha,\beta\in\Lambda_{N-1},
\]
and $\widetilde{K}_{\Sigma}\leq\widetilde{K}$ on the whole free semigroup.
The extension also has a concrete operator model. It can be written
as 
\[
\widetilde{K}\left(\alpha,\beta\right)=W^{*}S^{\alpha}P\left(S^{\beta}\right)^{*}W
\]
where $S=\left(S_{1},\ldots,S_{d}\right)$ is a row isometry with
orthogonal ranges, $P$ is an orthogonal projection, and $W:H\rightarrow\mathcal{K}$
is bounded. 

The boundary question (2) is different. The values of $K$ involving
words of length $N$ prescribe more than the shift on the interior.
They prescribe how the new vectors pair with the interior vectors,
and they also prescribe how the new vectors pair with one another.
These requirements need not be compatible with operators acting on
the interior Kolmogorov space.

To handle this, we introduce a shift-consistency condition in \prettyref{def:2-5}.
It asks that the level-$N$ data be produced by a column contraction
extending the shifts that are already visible on shorter words. When
this condition holds, \prettyref{thm:b8} gives a global kernel satisfying
\[
\widetilde{K}\left(\alpha,\beta\right)=K\left(\alpha,\beta\right),\quad\alpha,\beta\in\Lambda_{N}.
\]
We use shift consistency as a sufficient condition for boundary agreement,
and we do not claim that it is necessary.

Positive definite kernel extensions have long been part of interpolation,
moment theory, and operator realization \cite{MR51437}. In one variable,
the surrounding ideas include Szegő-Toeplitz theory, Pick-Nevanlinna
interpolation, and the classical moment problems \cite{Toeplitz1910,MR77,MR2760647,MR1511844,Nevanlinna:1919aa,MR208383,MR1882259,MR1544592,MR4191205,MR1303090,MR2105088,MR2105089,MR2589247}.
On free semigroups, the natural background comes from dilation and
model theory for row contractions, free semigroup operator algebras,
and noncommutative reproducing kernel spaces \cite{MR671311,MR744917,MR972704,MR1625750,MR1681749,MR2752983,MR3244229,MR3526117,MR3792241,MR4744483}.

There are also noncommutative moment problems in which one prescribes
values of a completely positive map on part of a free or product semigroup
and asks for a representation of the associated operator algebra \cite{MR1618326}.
The data in the present paper have a different form. We begin with
a positive definite kernel on a finite ball and ask to preserve the
inequality $K_{\Sigma}\leq K$. This makes the distinction between
the interior and the boundary part of the problem from the beginning.

\section{Local extension on the free semigroup}\label{sec:2}

Fix $d\in\mathbb{N}$ and the free semigroup $\mathbb{F}^{+}_{d}$
on $\{1,\dots,d\}$ with neutral element $\emptyset$. For a word
$\alpha=i_{1}\cdots i_{k}\in\mathbb{F}^{+}_{d}$ write $|\alpha|=k$
and $\tilde{\alpha}=i_{k}\cdots i_{1}$. Concatenation is denoted
by juxtaposition, so $\alpha i:=i_{1}\cdots i_{k}i$.

For $N\in\mathbb{N}$, set 
\[
\Lambda_{N}:=\left\{ \alpha\in\mathbb{F}^{+}_{d}:\left|\alpha\right|\le N\right\} ,\quad\partial\Lambda_{N}:=\left\{ \alpha\in\mathbb{F}^{+}_{d}:\left|\alpha\right|=N\right\} .
\]
We refer to $\Lambda_{N-1}$ as the interior and $\partial\Lambda_{N}$
as the boundary at level $N$.

Let $H$ be a Hilbert space and $L\left(H\right)$ the space of bounded
linear operators on $H$. A map 
\[
K:E\times E\longrightarrow L(H)
\]
defined on a finite index set $E\subset\mathbb{F}^{+}_{d}$ is called
positive definite (p.d.) on $E$ if for every finite family $\left\{ \left(\alpha_{j},u_{j}\right)\right\} ^{m}_{j=1}\subset E\times H$,
\[
\sum^{m}_{j,k=1}\left\langle u_{j},K\left(\alpha_{j},\alpha_{k}\right)u_{k}\right\rangle _{H}\ge0.
\]
If $E=\Lambda_{N}$ we simply say “$K$ is p.d. on $\Lambda_{N}$.”

Whenever $\alpha i,\beta i\in E$ for all $i=1,\dots,d$, we define
the (one-step) shifted kernel 
\[
K_{\Sigma}\left(\alpha,\beta\right):=\sum^{d}_{i=1}K\left(\alpha i,\beta i\right).
\]
When both sides make sense on a set $F\subset E$, we write $K_{\Sigma}\le K$
on $F$ to mean that $K-K_{\Sigma}$ is p.d. on $F$. 
\begin{problem*}
Given a level $N$ and a p.d. kernel 
\[
K:\Lambda_{N}\times\Lambda_{N}\to L\left(H\right),
\]
decide when there exists a global p.d. kernel $\widetilde{K}:\mathbb{F}^{+}_{d}\times\mathbb{F}^{+}_{d}\to L\left(H\right)$
such that: 
\begin{itemize}
\item[E1.] 
\[
\widetilde{K}\left(\alpha,\beta\right)=K\left(\alpha,\beta\right),\quad\alpha,\beta\in\Lambda_{N-1};
\]
\item[E2.] 
\[
\widetilde{K}_{\Sigma}\le\widetilde{K}\quad\text{on }\mathbb{F}^{+}_{d}\times\mathbb{F}^{+}_{d};
\]
\item[E3.] 
\[
\widetilde{K}\left(\alpha,\beta\right)=K\left(\alpha,\beta\right),\quad\alpha,\beta\in\Lambda_{N}.
\]
\end{itemize}
\end{problem*}
We will separate the existence of an extension satisfying (E1) \&
(E2) from the sharper boundary agreement question (E3).

For later use, we single out two  families of admissible completions. 
\begin{defn}
Fix $N$ and a p.d. $K$ on $\Lambda_{N}$. 
\begin{enumerate}
\item $E_{int}\left(K\right)$ denotes the set of all p.d. kernels $\widetilde{K}$
on $\mathbb{F}^{+}_{d}$ such that (E1) and (E2) hold. 
\item $E_{bd}(K)$ denotes the subset of $E_{int}\left(K\right)$ consisting
of those $\widetilde{K}$ that also satisfy (E3) (full agreement on
$\Lambda_{N}$). 
\end{enumerate}
\end{defn}

To phrase the boundary question intrinsically on the given data, we
use the Kolmogorov (RKHS) space of the truncation and the compressed
shifts. 

Let $K:\Lambda_{N}\times\Lambda_{N}\rightarrow L\left(H\right)$ be
a p.d. kernel. Let $H^{\left(N\right)}_{K}$ be the associated Kolmogorov
space, with canonical vectors $V_{\alpha}u$ ($\alpha\in\Lambda_{N}$,
$u\in H$), so that 
\[
\left\langle V_{\alpha}u,V_{\beta}v\right\rangle _{H^{\left(N\right)}_{K}}=\left\langle u,K\left(\alpha,\beta\right)v\right\rangle _{H}
\]
for all $u,v\in H$, and all $\alpha,\beta\in\Lambda_{N}$. That is
(see, e.g., \cite{MR2938971,MR4250453,tian2025randomoperatorvaluedkernelsmoment}),
\[
K\left(\alpha,\beta\right)=V^{*}_{\alpha}V_{\beta},\quad\alpha,\beta\in\Lambda_{N}.
\]
Set 
\begin{align}
H^{\left(N-1\right)}_{K} & :=\overline{span}\left\{ V_{\alpha}u:\alpha\in\Lambda_{N-1},\ u\in H\right\} \subset H^{\left(N\right)}_{K},\label{eq:2-1}\\
H^{\left(N-2\right)}_{K} & :=\overline{span}\left\{ V_{\alpha}u:\alpha\in\Lambda_{N-2},\ u\in H\right\} \subset H^{\left(N-1\right)}_{K},\label{eq:2-2}
\end{align}
where, when $N=1$, we use the convention 
\[
\Lambda_{-1}=\emptyset,\qquad H^{\left(-1\right)}_{K}=\left\{ 0\right\} ,\qquad D_{-1}=\left\{ 0\right\} .
\]
Let 
\[
P_{N-2}:H^{\left(N-1\right)}_{K}\rightarrow H^{\left(N-2\right)}_{K}
\]
be the orthogonal projection.

On 
\[
D_{N-2}:=span\left\{ V_{\alpha}u:\alpha\in\Lambda_{N-2},\ u\in H\right\} ,
\]
define 
\[
C_{i}\left(\sum_{\alpha\in\Lambda_{N-2}}V_{\alpha}u_{\alpha}\right):=\sum_{\alpha\in\Lambda_{N-2}}V_{\alpha i}u_{\alpha},\quad i=1,\dots,d.
\]

If $K_{\Sigma}\le K$ on $\Lambda_{N-1}\times\Lambda_{N-1}$, the
column map 
\[
Cy:=\left(C_{1}y,\dots,C_{d}y\right),\quad D_{N-2}\rightarrow\left(H^{\left(N-1\right)}_{K}\right)^{\oplus d},
\]
is well defined and contractive, and hence extends by continuity to
$H^{\left(N-2\right)}_{K}$.

Define 
\[
B_{i}:=C_{i}P_{N-2},\quad i=1,\dots,d.
\]

\begin{lem}
\label{lem:2-2}Then the column map 
\[
Bx:=\left(B_{1}x,\dots,B_{d}x\right),\quad H^{\left(N-1\right)}_{K}\rightarrow\left(H^{\left(N-1\right)}_{K}\right)^{\oplus d},
\]
is contractive. In particular, 
\[
\sum\nolimits_{i}B^{*}_{i}B_{i}\le I_{H^{\left(N-1\right)}_{K}}.
\]
Moreover, 
\[
B_{i}V_{\alpha}u=V_{\alpha i}u
\]
for every $\alpha\in\Lambda_{N-2}$, $u\in H$, and $i=1,\dots,d$. 
\end{lem}

\begin{proof}
Let 
\[
y=\sum_{\alpha\in\Lambda_{N-2}}V_{\alpha}u_{\alpha}\in D_{N-2}.
\]
Using the RKHS inner product, 
\begin{align}
\sum\nolimits^{d}_{i=1}\left\Vert C_{i}y\right\Vert ^{2}_{H^{\left(N-1\right)}_{K}} & =\sum_{\alpha,\beta\in\Lambda_{N-2}}\sum\nolimits^{d}_{i=1}\left\langle u_{\alpha},K\left(\alpha i,\beta i\right)u_{\beta}\right\rangle _{H}\nonumber \\
 & =\sum_{\alpha,\beta\in\Lambda_{N-2}}\left\langle u_{\alpha},K_{\Sigma}\left(\alpha,\beta\right)u_{\beta}\right\rangle _{H}\nonumber \\
 & \le\sum_{\alpha,\beta\in\Lambda_{N-2}}\left\langle u_{\alpha},K\left(\alpha,\beta\right)u_{\beta}\right\rangle _{H}=\left\Vert y\right\Vert ^{2}_{H^{\left(N-2\right)}_{K}}.\label{eq:b1}
\end{align}

In particular, if $y=\sum_{\alpha\in\Lambda_{N-2}}V_{\alpha}u_{\alpha}=0$,
then $\sum\nolimits^{d}_{i=1}\left\Vert C_{i}y\right\Vert ^{2}=0$.
Thus each $C_{i}$ is well defined on $D_{N-2}$, and \eqref{eq:b1}
shows that the column map $C$ is contractive. It therefore extends
by continuity to $H^{\left(N-2\right)}_{K}$.

Now let $x\in H^{\left(N-1\right)}_{K}$. Since $B_{i}=C_{i}P_{N-2}$,
\[
\sum\nolimits^{d}_{i=1}\left\Vert B_{i}x\right\Vert ^{2}=\sum\nolimits^{d}_{i=1}\left\Vert C_{i}P_{N-2}x\right\Vert ^{2}\le\left\Vert P_{N-2}x\right\Vert ^{2}\le\left\Vert x\right\Vert ^{2}.
\]
Hence $B$ is contractive and 
\[
\sum\nolimits_{i}B^{*}_{i}B_{i}\le I_{H^{\left(N-1\right)}_{K}}.
\]

Finally, if $\alpha\in\Lambda_{N-2}$, then $V_{\alpha}u\in H^{\left(N-2\right)}_{K}$,
and therefore 
\[
B_{i}V_{\alpha}u=C_{i}P_{N-2}V_{\alpha}u=C_{i}V_{\alpha}u=V_{\alpha i}u.
\]
\end{proof}

\begin{cor}
\label{cor:2-3} Assume $K$ is p.d. on $\Lambda_{N}$ and $K_{\Sigma}\le K$
on $\Lambda_{N-1}\times\Lambda_{N-1}$. With $H^{\left(N-1\right)}_{K}$
and the operators $B_{1},\dots,B_{d}$ as in \prettyref{lem:2-2},
set 
\[
A_{N-1}:=\sum^{d}_{i=1}B^{*}_{i}B_{i}\in\mathcal{L}(H^{\left(N-1\right)}_{K}),\quad0\le A_{N-1}\le I_{H^{\left(N-1\right)}_{K}}.
\]
Then for all $\alpha,\beta\in\Lambda_{N-2}$, 
\[
K_{\Sigma}(\alpha,\beta)\ =\ V^{*}_{\alpha}\,A_{N-1}\,V_{\beta}.
\]
\end{cor}

\begin{proof}
Let $y=\sum_{\alpha\in\Lambda_{N-2}}V_{\alpha}u_{\alpha}\in H^{\left(N-2\right)}_{K}\subset H^{\left(N-1\right)}_{K}$.
Then 
\[
\sum^{d}_{i=1}\left\Vert B_{i}y\right\Vert ^{2}_{H^{\left(N\right)}_{K}}=\sum_{\alpha,\beta\in\Lambda_{N-2}}\sum^{d}_{i=1}\left\langle u_{\alpha},K\left(\alpha i,\beta i\right)u_{\beta}\right\rangle _{H}=\sum_{\alpha,\beta\in\Lambda_{N-2}}\left\langle u_{\alpha},K_{\Sigma}\left(\alpha,\beta\right)u_{\beta}\right\rangle _{H},
\]
while also 
\[
\sum^{d}_{i=1}\left\Vert B_{i}y\right\Vert ^{2}_{H^{\left(N\right)}_{K}}=\left\langle y,A_{N-1}y\right\rangle _{H^{\left(N\right)}_{K}}=\sum_{\alpha,\beta\in\Lambda_{N-2}}\left\langle u_{\alpha},V^{*}_{\alpha}A_{N-1}V_{\beta}u_{\beta}\right\rangle _{H}.
\]
Identifying the two quadratic forms on the dense span of $\left\{ V_{\alpha}u:\alpha\in\Lambda_{N-2}\right\} $
gives $K_{\Sigma}\left(\alpha,\beta\right)=V^{*}_{\alpha}A_{N-1}V_{\beta}$. 
\end{proof}

\begin{defn}
\label{def:2-5}We say the boundary data of $K$ at level $N$ are
\textit{shift consistent} if there exist bounded operators $T_{1},\dots,T_{d}$
on $H^{\left(N-1\right)}_{K}$ such that 
\[
\sum_{i}T^{\ast}_{i}T_{i}\le I_{H^{\left(N-1\right)}_{K}},\quad T_{i}V_{\alpha}u=V_{\alpha i}u\quad\left(\alpha\in\Lambda_{N-2}\right)
\]
and, for all $\alpha,\beta\in\Lambda_{N-1}$ and $u,v\in H$, 
\begin{equation}
\left\langle V_{\alpha}u,T_{i}V_{\beta}v\right\rangle _{H^{\left(N\right)}_{K}}=\left\langle u,K\left(\alpha,\beta i\right)v\right\rangle _{H},\label{eq:b2}
\end{equation}
and 
\begin{equation}
\left\langle T_{i}V_{\alpha}u,T_{j}V_{\beta}v\right\rangle _{H^{\left(N\right)}_{K}}=\left\langle u,K\left(\alpha i,\beta j\right)v\right\rangle _{H}.\label{eq:b3}
\end{equation}
In this case one necessarily has 
\begin{equation}
P_{N-2}\Big(\sum^{d}_{i=1}T^{*}_{i}T_{i}\Big)\big|_{H^{\left(N-2\right)}_{K}}=\sum^{d}_{i=1}B^{*}_{i}B_{i}=A_{N-1},\label{eq:b4}
\end{equation}
so the interior density $A_{N-1}$ is the compression of $\sum_{i}T^{*}_{i}T_{i}$. 
\end{defn}

\begin{rem}
At level $N$, the two matching conditions play complementary roles.
Condition \eqref{eq:b2} determines, for each $i$ and $\beta\in\Lambda_{N-1}$,
the vector $T_{i}V_{\beta}u\in H^{(N-1)}_{K}$ uniquely via its pairings
with $H^{(N-1)}_{K}$ (pairings taken in $\langle\cdot,\cdot\rangle_{H^{(N)}_{K}}$),
thereby realizing the interior-boundary entries indexed by $(\alpha,\beta i)$. 

With these vectors fixed, \eqref{eq:b3} requires that the mutual
inner products $\left\langle T_{i}V_{\alpha}u,T_{j}V_{\beta}v\right\rangle _{H^{\left(N\right)}_{K}}$
match the entries indexed by $(\alpha i,\beta j)$, which pins down
the boundary-boundary block. 

In particular, the compression identity \eqref{eq:b4} is necessary
(e.g., from \eqref{eq:b2} or from \eqref{eq:b3} together with \prettyref{cor:2-3})
but does not determine the boundary-boundary block on its own; \eqref{eq:b3}
is needed for that. Together, these matching conditions ensure agreement
with $K$ on $\Lambda_{N}\times\Lambda_{N}$.
\end{rem}

\section{Existence and boundary agreement}\label{sec:3}

The results below address: (i) existence of global p.d. extensions
$\widetilde{K}$ that preserve the interior data and satisfy the shift
inequality, with a Cuntz-Toeplitz realization (\prettyref{thm:3-1});
and (ii) a criterion (shift consistency) for full agreement on the
boundary level $N$ (\prettyref{thm:b8}).
\begin{thm}
\label{thm:3-1}If $K:\Lambda_{N}\times\Lambda_{N}\rightarrow L\left(H\right)$
is p.d. and satisfies $K_{\Sigma}\le K$ on $\Lambda_{N-1}\times\Lambda_{N-1}$,
then $E_{int}\left(K\right)\ne\emptyset$. In particular, there exists
a global p.d. extension $\widetilde{K}$ with (E1)-(E2) and a Cuntz-Toeplitz
realization 
\[
\widetilde{K}\left(\alpha,\beta\right)=W^{\ast}S^{\alpha}P\left(S^{\beta}\right)^{*}W,\quad\alpha,\beta\in\mathbb{F}^{+}_{d},
\]
where $S=\left(S_{1},\dots,S_{d}\right)$ is a row isometry with orthogonal
ranges on a Hilbert space $\mathcal{K}$, $P$ is an orthogonal projection
on $\mathcal{K}$, and $W:H\to\mathcal{K}$ is bounded.
\end{thm}

\begin{proof}
Let $H^{\left(N\right)}_{K}$ be the Kolmogorov space of the truncated
kernel on $\Lambda_{N}$ with canonical vectors $V_{\alpha}u$. Let
$H^{\left(N-1\right)}_{K}$ and $H^{\left(N-2\right)}_{K}$ be as
in \prettyref{eq:2-1}--\prettyref{eq:2-2}.

Let $B_{1},\dots,B_{d}$ be the operators on $H^{\left(N-1\right)}_{K}$
from \prettyref{lem:2-2}. Then 
\[
\sum^{d}_{i=1}B^{*}_{i}B_{i}\le I_{H^{\left(N-1\right)}_{K}},
\]
and $B_{i}V_{\alpha}u=V_{\alpha i}u$ for every $\alpha\in\Lambda_{N-2}$,
$u\in H$, and $i=1,\dots,d$.

By the multivariable dilation theorem for column contractions \cite{MR671311,MR744917,MR972704},
there exist a Hilbert space $\mathcal{K}$, a row isometry $S=(S_{1},\dots,S_{d})$
on $\mathcal{K}$ with orthogonal ranges (i.e., $S^{*}_{i}S_{j}=\delta_{ij}I_{\mathcal{K}}$),
and an isometry 
\[
J:H^{\left(N-1\right)}_{K}\rightarrow\mathcal{K}
\]
such that 
\begin{equation}
S^{*}_{i}J=JB_{i},\quad i=1,\dots,d.\label{eq:dil}
\end{equation}
In particular, $B_{i}=J^{*}S^{*}_{i}J$.

Consequently, for every word $\gamma=i_{1}\cdots i_{k}\in\mathbb{F}^{+}_{d}$,
\begin{equation}
J^{*}\left(S^{\gamma}\right)^{*}J=J^{*}S^{*}_{i_{k}}\cdots S^{*}_{i_{1}}J=B_{i_{k}}\cdots B_{i_{1}}=:B^{\tilde{\gamma}},\label{eq:iter-adj}
\end{equation}
where $\tilde{\gamma}=i_{k}\cdots i_{1}$ is the reversed word. Taking
adjoints in \eqref{eq:iter-adj} gives 
\begin{equation}
J^{*}S^{\gamma}J=\left(J^{*}\left(S^{\gamma}\right)^{*}J\right)^{*}=\left(B^{\tilde{\gamma}}\right)^{*}.\label{eq:iter-nonadj}
\end{equation}
Let $P:=JJ^{*}$ be the orthogonal projection of $\mathcal{K}$ onto
$JH^{\left(N-1\right)}_{K}$.

Define $W:H\to\mathcal{K}$ by 
\[
Wu:=JV_{\emptyset}u,\quad u\in H.
\]
(We do not need $W$ to be an isometry.) Define a kernel $\widetilde{K}$
on $\mathbb{F}^{+}_{d}\times\mathbb{F}^{+}_{d}$ by 
\begin{align}
\widetilde{K}\left(\alpha,\beta\right) & :=W^{*}S^{\alpha}P\left(S^{\beta}\right)^{*}W\label{eq:K-tilde}\\
 & =V^{*}_{\emptyset}J^{*}S^{\alpha}P\left(S^{\beta}\right)^{*}JV_{\emptyset}.\nonumber 
\end{align}
For any finite family $\left\{ \left(\alpha_{k},u_{k}\right)\right\} $
in $\mathbb{F}^{+}_{d}\times H$, 
\[
\sum\nolimits_{j,k}\left\langle u_{j},\widetilde{K}\left(\alpha_{j},\alpha_{k}\right)u_{k}\right\rangle _{H}=\left\Vert P\sum\nolimits_{k}\left(S^{\alpha_{k}}\right)^{*}Wu_{k}\right\Vert ^{2}_{\mathcal{K}}\ge0,
\]
so $\widetilde{K}$ is positive definite.

Fix $\alpha,\beta\in\Lambda_{N-1}$. Using \eqref{eq:iter-nonadj},
\eqref{eq:iter-adj} and $P=JJ^{*}$, we get
\begin{align*}
\widetilde{K}\left(\alpha,\beta\right) & =V^{*}_{\emptyset}J^{*}S^{\alpha}P\left(S^{\beta}\right)^{*}JV_{\emptyset}\\
 & =V^{*}_{\emptyset}\left(J^{*}S^{\alpha}J\right)\left(J^{*}\left(S^{\beta}\right)^{*}J\right)V_{\emptyset}\\
 & =V^{*}_{\emptyset}\left(B^{\tilde{\alpha}}\right)^{*}B^{\tilde{\beta}}V_{\emptyset}.
\end{align*}
We now justify the identification $B^{\tilde{\gamma}}V_{\emptyset}u=V_{\gamma}u$
for all $\gamma\in\Lambda_{N-1}$: if $\gamma=i_{1}\cdots i_{k}$
with $k\le N-1$, then recursively 
\[
B_{i_{1}}V_{\emptyset}u=V_{i_{1}}u,\quad B_{i_{2}}B_{i_{1}}V_{\emptyset}u=V_{i_{1}i_{2}}u,\ \dots,\ B_{i_{k}}\cdots B_{i_{1}}V_{\emptyset}u=V_{i_{1}\cdots i_{k}}u,
\]
and all applications are legal because at each step the current word
length is $\le N-2$ before applying $B_{i_{r+1}}$. Hence $B^{\tilde{\beta}}V_{\emptyset}=V_{\beta}$
and $B^{\tilde{\alpha}}V_{\emptyset}=V_{\alpha}$. Therefore, 
\[
\widetilde{K}\left(\alpha,\beta\right)=V^{*}_{\emptyset}\left(B^{\tilde{\alpha}}\right)^{*}B^{\tilde{\beta}}V_{\emptyset}=V^{*}_{\alpha}V_{\beta}=K\left(\alpha,\beta\right),
\]
i.e., $\widetilde{K}$ agrees with $K$ on $\Lambda_{N-1}\times\Lambda_{N-1}$.

Let 
\[
M:=\overline{span}\left\{ \left(S^{\gamma}\right)^{*}WH:\gamma\in\mathbb{F}^{+}_{d}\right\} \subset\mathcal{K}.
\]
We claim 
\begin{equation}
\sum^{d}_{i=1}S_{i}PS^{*}_{i}\le P\quad\text{on }M.\label{eq:defect-ineq}
\end{equation}
Indeed, fix a finite sum $y=\sum_{r}\left(S^{\gamma_{r}}\right)^{*}Wu_{r}\in M$.
Then 
\[
J^{*}y=\sum_{r}J^{*}\left(S^{\gamma_{r}}\right)^{*}JV_{\emptyset}u_{r}=\sum_{r}B^{\tilde{\gamma}_{r}}V_{\emptyset}u_{r}\in H^{\left(N-1\right)}_{K},
\]
and, for each $i$, 
\[
J^{*}S^{*}_{i}y=\sum_{r}J^{*}\left(S^{\gamma_{r}i}\right)^{*}JV_{\emptyset}u_{r}=\sum_{r}B_{i}B^{\tilde{\gamma}_{r}}V_{\emptyset}u_{r}=B_{i}\left(J^{*}y\right).
\]
Hence 
\[
\sum^{d}_{i=1}\left\Vert J^{*}S^{*}_{i}y\right\Vert ^{2}=\sum^{d}_{i=1}\left\Vert B_{i}\left(J^{*}y\right)\right\Vert ^{2}\le\left\Vert J^{*}y\right\Vert ^{2},
\]
by $\sum_{i}B^{*}_{i}B_{i}\le I_{H^{\left(N-1\right)}_{K}}$. Rewriting
this in $\mathcal{K}$ yields 
\[
\sum^{d}_{i=1}\left\langle y,S_{i}PS^{*}_{i}y\right\rangle _{\mathcal{K}}\le\left\langle y,Py\right\rangle _{\mathcal{K}},
\]
i.e., \eqref{eq:defect-ineq}. Now let $\left\{ \left(\alpha_{j},u_{j}\right)\right\} ^{m}_{j=1}\subset\mathbb{F}^{+}_{d}\times H$
and set 
\[
y:=\sum^{m}_{j=1}\left(S^{\alpha_{j}}\right)^{*}Wu_{j}\in M.
\]
Then, by \eqref{eq:K-tilde} and \eqref{eq:defect-ineq}, 
\begin{align*}
\sum\nolimits^{m}_{j,k=1}\left\langle u_{j},\widetilde{K}_{\Sigma}\left(\alpha_{j},\alpha_{k}\right)u_{k}\right\rangle _{H} & =\left\langle y,\left(\sum\nolimits^{d}_{i=1}S_{i}PS^{*}_{i}\right)y\right\rangle _{\mathcal{K}}\le\left\langle y,Py\right\rangle _{\mathcal{K}}\\[1mm]
 & =\sum\nolimits^{m}_{j,k=1}\left\langle u_{j},\widetilde{K}\left(\alpha_{j},\alpha_{k}\right)u_{k}\right\rangle _{H},
\end{align*}
which is exactly the quadratic form inequality $\widetilde{K}_{\Sigma}\le\widetilde{K}$
on $\mathbb{F}^{+}_{d}\times\mathbb{F}^{+}_{d}$.

Combining the above, $\widetilde{K}$ is a global p.d. kernel satisfying
(E1)-(E2). The displayed formula is its Cuntz-Toeplitz realization.
This shows $E_{int}\left(K\right)\neq\emptyset$ and completes the
proof.
\end{proof}

\begin{thm}
\label{thm:b8}Let $K$ be p.d. on $\Lambda_{N}$ and $K_{\Sigma}\le K$
on $\Lambda_{N-1}\times\Lambda_{N-1}$. Suppose the boundary data
of $K$ at level $N$ are shift consistent (\prettyref{def:2-5}).
Then $E_{bd}\left(K\right)\ne\emptyset$. There exists a global p.d.
kernel $\widetilde{K}$ on $\mathbb{F}^{+}_{d}\times\mathbb{F}^{+}_{d}$
with $\widetilde{K}_{\Sigma}\le\widetilde{K}$ and 
\[
\widetilde{K}\left(\alpha,\beta\right)=K\left(\alpha,\beta\right),\quad\alpha,\beta\in\Lambda_{N}.
\]
\end{thm}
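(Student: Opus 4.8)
The plan is to mirror the proof of \prettyref{thm:b7}, replacing the partial interior shift $B=(B_{1},\dots,B_{d})$ by the genuine row contraction $T=(T_{1},\dots,T_{d})$ on $H^{(N-1)}_{K}$ that \prettyref{def:b5} furnishes. Since $\sum_{i}T_{i}^{*}T_{i}\le I_{H^{(N-1)}_{K}}$, the column map $x\mapsto(T_{1}x,\dots,T_{d}x)$ is contractive, so the same multivariable dilation theorem used in \prettyref{thm:b7} yields a Hilbert space $\mathcal{K}$, a row isometry $S=(S_{1},\dots,S_{d})$ on $\mathcal{K}$ with $S_{i}^{*}S_{j}=\delta_{ij}I_{\mathcal{K}}$, and an isometry $J:H^{(N-1)}_{K}\to\mathcal{K}$ with $S_{i}^{*}J=JT_{i}$. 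I would then set $P:=JJ^{*}$, $Wu:=JV_{\emptyset}u$, and
\[
\widetilde{K}(\alpha,\beta):=W^{*}S^{\alpha}P(S^{\beta})^{*}W,\qquad\alpha,\beta\in\mathbb{F}^{+}_{d},
\]
which has the same Cuntz-Toeplitz form as in \prettyref{thm:b7}. Positivity of $\widetilde{K}$ is immediate, since $\sum_{j,k}\langle u_{j},\widetilde{K}(\alpha_{j},\alpha_{k})u_{k}\rangle_{H}=\|P\sum_{k}(S^{\alpha_{k}})^{*}Wu_{k}\|_{\mathcal{K}}^{2}\ge0$, and the global inequality $\widetilde{K}_{\Sigma}\le\widetilde{K}$ on $\mathbb{F}^{+}_{d}\times\mathbb{F}^{+}_{d}$ is obtained exactly as in Step~5 of \prettyref{thm:b7}: for $y=\sum_{r}(S^{\gamma_{r}})^{*}Wu_{r}$ one has $J^{*}S_{i}^{*}y=T_{i}(J^{*}y)$, so $\sum_{i}\|J^{*}S_{i}^{*}y\|^{2}\le\|J^{*}y\|^{2}$ by $\sum_{i}T_{i}^{*}T_{i}\le I$, which is precisely $\sum_{i}S_{i}PS_{i}^{*}\le P$ on the relevant subspace. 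Hence $\widetilde{K}\in E_{int}(K)$ in any case; the new content is the boundary agreement (E3).

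For (E3) I would first record the dilation identities. Iterating $S_{i}^{*}J=JT_{i}$ gives $(S^{\gamma})^{*}J=JT^{\tilde{\gamma}}$ for every word $\gamma=i_{1}\cdots i_{k}$, with $T^{\tilde{\gamma}}:=T_{i_{k}}\cdots T_{i_{1}}$; thus $J^{*}(S^{\gamma})^{*}J=T^{\tilde{\gamma}}$ and, by adjoints, $J^{*}S^{\gamma}J=(T^{\tilde{\gamma}})^{*}$. Substituting into the definition of $\widetilde{K}$ and using $P=JJ^{*}$,
\[
\langle u,\widetilde{K}(\alpha,\beta)v\rangle_{H}=\langle T^{\tilde{\alpha}}V_{\emptyset}u,\,T^{\tilde{\beta}}V_{\emptyset}v\rangle_{H^{(N)}_{K}}
\]
(the right side being an inner product of vectors of $H^{(N-1)}_{K}$, isometrically included in $H^{(N)}_{K}$). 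I would then evaluate $T^{\tilde{\gamma}}V_{\emptyset}u$ for $\gamma\in\Lambda_{N}$: using $T_{i}V_{\delta}w=V_{\delta i}w$ for $\delta\in\Lambda_{N-2}$ and tracking lengths (after $r$ applications the running word has length $r$, and the last application permitted by this formula is to a word of length $N-2$), one gets $T^{\tilde{\gamma}}V_{\emptyset}u=V_{\gamma}u$ for all $\gamma\in\Lambda_{N-1}$, and for a boundary word $\gamma=\gamma'i\in\partial\Lambda_{N}$ (so $\gamma'\in\partial\Lambda_{N-1}\subset\Lambda_{N-1}$) one extra step gives $T^{\tilde{\gamma}}V_{\emptyset}u=T_{i}\bigl(T^{\tilde{\gamma'}}V_{\emptyset}u\bigr)=T_{i}V_{\gamma'}u$.

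With these evaluations, boundary agreement splits into four cases according to whether each of $\alpha,\beta$ lies in $\Lambda_{N-1}$ or in $\partial\Lambda_{N}$. If $\alpha,\beta\in\Lambda_{N-1}$, the displayed formula gives $\widetilde{K}(\alpha,\beta)=V_{\alpha}^{*}V_{\beta}=K(\alpha,\beta)$, reproducing Step~4 of \prettyref{thm:b7} and in particular (E1). If $\alpha\in\Lambda_{N-1}$ and $\beta=\beta'i\in\partial\Lambda_{N}$, then $\langle u,\widetilde{K}(\alpha,\beta)v\rangle=\langle V_{\alpha}u,T_{i}V_{\beta'}v\rangle=\langle u,K(\alpha,\beta'i)v\rangle=\langle u,K(\alpha,\beta)v\rangle$ by \eqref{eq:b2} applied to the pair $(\alpha,\beta')\in\Lambda_{N-1}\times\Lambda_{N-1}$; the mirror case $\alpha\in\partial\Lambda_{N}$, $\beta\in\Lambda_{N-1}$ follows from this and the Hermitian symmetries $K(\beta,\alpha)^{*}=K(\alpha,\beta)$, $\widetilde{K}(\beta,\alpha)^{*}=\widetilde{K}(\alpha,\beta)$. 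If $\alpha=\alpha'i$ and $\beta=\beta'j$ both lie in $\partial\Lambda_{N}$, then $\langle u,\widetilde{K}(\alpha,\beta)v\rangle=\langle T_{i}V_{\alpha'}u,T_{j}V_{\beta'}v\rangle=\langle u,K(\alpha'i,\beta'j)v\rangle=\langle u,K(\alpha,\beta)v\rangle$ by \eqref{eq:b3} applied to $(\alpha',\beta')$. Hence $\widetilde{K}=K$ on all of $\Lambda_{N}\times\Lambda_{N}$, which gives (E1) and (E3); with (E2) already in hand, $\widetilde{K}\in E_{bd}(K)$, so $E_{bd}(K)\ne\emptyset$.

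I do not expect a genuine obstacle: the crux is simply noticing that \prettyref{def:b5} hands over exactly the row contraction needed to run the \prettyref{thm:b7} machinery, after which everything reduces to the four-case identification above. The only points requiring care are the length bookkeeping in the recursion $T^{\tilde{\gamma}}V_{\emptyset}u=V_{\gamma}u$, matching indices correctly when invoking \eqref{eq:b2} and \eqref{eq:b3} (always truncating both indices to $\Lambda_{N-1}$), and the degenerate small-$N$ situations---e.g.\ $N=1$, where $\Lambda_{N-2}=\emptyset$ and the identity $T_{i}V_{\delta}w=V_{\delta i}w$ is vacuous, the ``interior'' cases being trivial there---all of which are routine.
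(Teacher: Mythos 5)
Your proposal is correct and is essentially the paper's own proof: dilate the row contraction $T$ supplied by Definition \ref{def:b5} via Frazho--Bunce--Popescu, set $\widetilde{K}(\alpha,\beta)=W^{*}S^{\alpha}P(S^{\beta})^{*}W$ with $P=JJ^{*}$ and $W=JV_{\emptyset}$, verify boundary agreement by a case split using \eqref{eq:b2} and \eqref{eq:b3}, and reuse Step 5 of Theorem \ref{thm:b7} for the shift inequality. The only cosmetic difference is that you split the agreement into four mutually exclusive cases (interior/boundary for each of $\alpha,\beta$) and use Hermitian symmetry for the mirror case, whereas the paper uses a slightly overlapping trichotomy and computes the mirror case directly; both are fine.
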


\begin{proof}
Assume there exist bounded operators $T_{1},\dots,T_{d}$ on $H^{\left(N-1\right)}_{K}$
with 
\[
\sum^{d}_{i=1}T^{*}_{i}T_{i}\le I_{H^{\left(N-1\right)}_{K}},\qquad T_{i}V_{\alpha}u=V_{\alpha i}u,\quad\alpha\in\Lambda_{N-2},
\]
and, for all $\alpha,\beta\in\Lambda_{N-1}$, $u,v\in H$, 
\begin{align}
\left\langle V_{\alpha}u,T_{i}V_{\beta}v\right\rangle _{H^{\left(N\right)}_{K}} & =\left\langle u,K\left(\alpha,\beta i\right)v\right\rangle _{H},\label{eq:c12}\\
\left\langle T_{i}V_{\alpha}u,T_{j}V_{\beta}v\right\rangle _{H^{\left(N\right)}_{K}} & =\left\langle u,K\left(\alpha i,\beta j\right)v\right\rangle _{H}.\label{eq:c13}
\end{align}

Apply Frazho-Bunce-Popescu to the row contraction $T=(T_{i})$: there
exist a Hilbert space $\mathcal{K}$, a row isometry $S=(S_{i})$
with orthogonal ranges, and an isometry $J:H^{\left(N-1\right)}_{K}\to\mathcal{K}$
such that $S^{*}_{i}J=JT_{i}$. Moreover, 
\begin{equation}
J^{*}(S^{\gamma})^{*}J=T^{\tilde{\gamma}},\qquad J^{*}S^{\gamma}J=(T^{\tilde{\gamma}})^{*}.\label{eq:intw}
\end{equation}
Set $P:=JJ^{*}$ and $W:=JV_{\emptyset}$, and define the global kernel
\[
\widetilde{K}(\alpha,\beta):=W^{*}S^{\alpha}P(S^{\beta})^{*}W,\qquad\alpha,\beta\in\mathbb{F}^{+}_{d}.
\]
This $\widetilde{K}$ is positive definite by construction. We also
note that $T^{\tilde{\gamma}}V_{\emptyset}=V_{\gamma}$ for all $|\gamma|\le N-1$,
by iterating $T_{i}V_{\eta}=V_{\eta i}$ on $\Lambda_{N-2}$.

\emph{Agreement on $\Lambda_{N}\times\Lambda_{N}$.}

If $|\alpha|,|\beta|\le N-1$: 
\begin{alignat*}{1}
\begin{aligned}\left\langle u,\widetilde{K}\left(\alpha,\beta\right)v\right\rangle _{H} & =\left\langle V_{\emptyset}u,\left(J^{*}S^{\alpha}J\right)(J^{*}\left(S^{\beta}\right)^{*}J)V_{\emptyset}v\right\rangle _{H^{\left(N\right)}_{K}}\\
 & =\left\langle V_{\emptyset}u,\left(T^{\tilde{\alpha}}\right)^{*}T^{\tilde{\beta}}V_{\emptyset}v\right\rangle _{H^{\left(N\right)}_{K}}\\
 & =\left\langle T^{\tilde{\alpha}}V_{\emptyset}u,T^{\tilde{\beta}}V_{\emptyset}v\right\rangle _{H^{\left(N\right)}_{K}}\\
 & =\left\langle V_{\alpha}u,V_{\beta}v\right\rangle _{H^{\left(N\right)}_{K}}\\
 & =\left\langle u,K\left(\alpha,\beta\right)v\right\rangle _{H}.
\end{aligned}
\end{alignat*}

If $|\alpha|\le N-1$ and $\beta=\beta'j$ with $|\beta'|\le N-1$:
\[
\begin{aligned}\left\langle u,\widetilde{K}\left(\alpha,\beta\right)v\right\rangle _{H} & =\left\langle u,W^{*}S^{\alpha}P\left(S^{\beta'j}\right)^{*}Wv\right\rangle _{H^{\left(N\right)}_{K}}\\
 & =\left\langle V_{\emptyset}u,J^{*}S^{\alpha}JJ^{*}\left(S^{\beta'j}\right)^{*}JV_{\emptyset}v\right\rangle _{H^{\left(N\right)}_{K}}\\
 & =\left\langle V_{\emptyset}u,\left(J^{*}S^{\alpha}J\right)\left(J^{*}S^{*}_{j}J\right)\left(J^{*}\left(S^{\beta'}\right)^{*}J\right)V_{\emptyset}v\right\rangle _{H^{\left(N\right)}_{K}}\\
 & =\left\langle V_{\emptyset}u,\left(T^{\tilde{\alpha}}\right)^{*}T_{j}T^{\tilde{\beta'}}V_{\emptyset}v\right\rangle _{H^{\left(N\right)}_{K}}\\
 & =\left\langle T^{\tilde{\alpha}}V_{\emptyset}u,T_{j}T^{\tilde{\beta'}}V_{\emptyset}v\right\rangle _{H^{\left(N\right)}_{K}}\\
 & =\left\langle V_{\alpha}u,T_{j}V_{\beta'}v\right\rangle _{H^{\left(N\right)}_{K}}\\
 & =\left\langle u,K\left(\alpha,\beta'j\right)v\right\rangle _{H}.
\end{aligned}
\]
The case $|\alpha|=N$, $|\beta|\le N-1$ is symmetric.

If $\alpha=\alpha'i$ and $\beta=\beta'j$ with $|\alpha'|,|\beta'|\le N-1$:
\begin{align*}
\left\langle u,\widetilde{K}\left(\alpha'i,\beta'j\right)v\right\rangle _{H} & =\left\langle V_{\emptyset}u,\left(J^{*}S^{\alpha'i}J\right)\left(J^{*}\left(S^{\beta'j}\right)^{*}J\right)V_{\emptyset}v\right\rangle _{H^{\left(N\right)}_{K}}\\
 & =\left\langle V_{\emptyset}u,\left((T^{\widetilde{\alpha'i}})^{*}\right)\left(T^{\widetilde{\beta'j}}\right)V_{\emptyset}v\right\rangle _{H^{\left(N\right)}_{K}}\\
 & =\left\langle V_{\emptyset}u,(T^{\tilde{\alpha'}})^{*}T^{*}_{i}T_{j}T^{\tilde{\beta'}}V_{\emptyset}v\right\rangle _{H^{\left(N\right)}_{K}}\\
 & =\left\langle T^{\tilde{\alpha'}}V_{\emptyset}u,T^{*}_{i}T_{j}T^{\tilde{\beta'}}V_{\emptyset}v\right\rangle _{H^{\left(N\right)}_{K}}\\
 & =\left\langle V_{\alpha'}u,T^{*}_{i}T_{j}V_{\beta'}v\right\rangle _{H^{\left(N\right)}_{K}}\\
 & =\left\langle T_{i}V_{\alpha'}u,T_{j}V_{\beta'}v\right\rangle _{H^{\left(N\right)}_{K}}\\
 & =\left\langle u,K\left(\alpha'i,\beta'j\right)v\right\rangle _{H}.
\end{align*}

\noindent Thus $\widetilde{K}=K$ on $\Lambda_{N}\times\Lambda_{N}$.

\emph{Shift inequality.} Let $M:=\overline{\mathrm{span}}\{(S^{\gamma})^{*}WH:\gamma\in\mathbb{F}^{+}_{d}\}$.
For $y=\sum_{r}(S^{\gamma_{r}})^{*}Wu_{r}\in M$, \eqref{eq:intw}
gives $J^{*}y=\sum_{r}T^{\tilde{\gamma_{r}}}V_{\emptyset}u_{r}$ and
$J^{*}S^{*}_{i}y=T_{i}(J^{*}y)$, hence $\sum_{i}\|J^{*}S^{*}_{i}y\|^{2}=\sum_{i}\|T_{i}(J^{*}y)\|^{2}\le\|J^{*}y\|^{2}$.
Equivalently, $\sum_{i}\langle y,S_{i}PS^{*}_{i}y\rangle\le\langle y,Py\rangle$
on $M$, which is exactly $\widetilde{K}_{\Sigma}\le\widetilde{K}$.
\end{proof}

\begin{rem}
\label{rem:c3}We note that the classical truncated Hausdorff moment
problem (THMP) on $\left[0,1\right]$ fits our setting. Here, let
$d=1$, so $\mathbb{F}^{+}_{1}\simeq\mathbb{N}$. For a finite positive
Borel measure $\mu$ on $\left[0,1\right]$, consider the kernel 
\begin{equation}
K(m,n)=\int^{1}_{0}x^{m+n}d\mu(x),\quad m,n\in\mathbb{N}.\label{eq:d1}
\end{equation}

In the classical THMP one is given finitely many moments $s_{0},\dots,s_{M}$
and asks for a measure $\mu$ on $[0,1]$ with $s_{k}=\int^{1}_{0}x^{k}d\mu(x)$.
This imposes (i) a Hankel constraint $K(m,n)=s_{m+n}$ and (ii) the
Hausdorff feasibility (complete monotonicity) conditions on $(s_{k})$.
Whenever these hold, the associated kernel \eqref{eq:d1} is positive
definite. Moreover, with $K_{\Sigma}\left(m,n\right):=K\left(m+1,n+1\right)$,
one has $K_{\Sigma}\le K$ (see \prettyref{lem:c4}). Hence, THMP
feasibility $\Longrightarrow$ the hypotheses used in our extension
theorems (for $d=1$).

The converse fails in general: our assumptions (p.d. on a truncation
plus $K_{\Sigma}\le K$ on the interior) do not enforce Hankel structure
or the Hausdorff inequalities, so a dilation-based global extension
need not arise from a measure. When, in addition, one enforces the
Hankel form $\widetilde{K}(m,n)=s_{m+n}$ and the truncated Hausdorff
(complete-monotonicity) conditions, the two formulations coincide:
the model operator can be chosen as a self-adjoint contraction (multiplication
by $x$ on $L^{2}\left(\mu\right)$), and $\widetilde{K}(m,n)=\int^{1}_{0}x^{m+n}d\mu(x)$. 
\end{rem}

\begin{lem}
\label{lem:c4}Let $K$ be as in \eqref{eq:d1}. Then $K\geq0$ and
$K_{\Sigma}\le K$. Moreover, for every $N$, the truncations satisfy
$K|_{\Lambda_{N}\times\Lambda_{N}}\geq0$, and $K_{\Sigma}\le K$
on the interior block $\Lambda_{N-1}$. (Recall that $\Lambda_{N}=\left\{ 0,\dots,N\right\} $
for every $N\in\mathbb{N}$.)
\end{lem}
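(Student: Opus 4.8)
The statement has two layers: first the global assertions $K \geq 0$ and $K_\Sigma \leq K$ on all of $\mathbb{N} \times \mathbb{N}$, and then the observation that these pass to every truncation $\Lambda_N$. The key point is that everything reduces to a single pointwise-in-$x$ inequality under the integral sign. I would open by recording the integral representation of the quadratic forms: for a finite family $\{(m_j, c_j)\}$ of indices and scalars,
\[
\sum_{j,k} \overline{c_j} c_k K(m_j, m_k) = \int_0^1 \Big| \sum_j c_j x^{m_j} \Big|^2 d\mu(x) \geq 0,
\]
which gives $K \geq 0$ immediately since $\mu \geq 0$. (The operator-valued case is identical with $H$ one-dimensional; here $L(H) = \mathbb{C}$, so scalars suffice.)

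**The dominance inequality.** For $K_\Sigma \leq K$ I would compute, for the same data,
\[
\sum_{j,k} \overline{c_j} c_k \big( K(m_j, m_k) - K_\Sigma(m_j, m_k) \big) = \int_0^1 \Big| \sum_j c_j x^{m_j} \Big|^2 (1 - x^2)\, d\mu(x),
\]
using $K_\Sigma(m,n) = K(m+1,n+1) = \int_0^1 x^{m+n+2} d\mu(x)$ so that the integrand picks up the factor $x^2$. Since $1 - x^2 \geq 0$ on $[0,1]$ and $\mu \geq 0$, the integral is nonnegative, giving $K_\Sigma \leq K$. I would remark that $1-x^2 = (1-x)(1+x) \geq 0$ is exactly where the interval $[0,1]$ (more precisely $x \in [-1,1]$, but here $[0,1]$) enters.

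**Passing to truncations.** The truncation statements are then essentially free: the quadratic-form inequalities above hold for \emph{every} finite family, in particular for families drawn from $\Lambda_N \times H$ or from $\Lambda_{N-1} \times H$, so $K|_{\Lambda_N \times \Lambda_N} \geq 0$ and $K - K_\Sigma \geq 0$ on $\Lambda_{N-1} \times \Lambda_{N-1}$ follow by restriction. The only mild point to note is that $K_\Sigma$ is defined on $\Lambda_{N-1}$ because $m+1, n+1 \leq N$ when $m, n \leq N-1$, so the expression $K(m+1,n+1)$ makes sense within the given data — i.e., the one-step shift stays inside $\Lambda_N$. I would close by noting $\Lambda_N = \{0, 1, \dots, N\}$ under the identification $\mathbb{F}^+_1 \simeq \mathbb{N}$, consistent with the parenthetical in the statement.

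**Main obstacle.** There is essentially no obstacle here; this is a routine computation. The only thing to be careful about is bookkeeping in the operator-valued phrasing — writing the forms with $\langle u_j, K(\cdot,\cdot) u_k\rangle_H$ and pulling the scalar $x^{m+n}$ (resp. $x^{m+n+2}$) out of the $H$-inner product before integrating — but since the measure is scalar this is immediate. I would keep the write-up to a few lines: state the two integral identities, invoke $1 - x^2 \geq 0$ on $[0,1]$, and restrict to finite families indexed by $\Lambda_N$ or $\Lambda_{N-1}$.
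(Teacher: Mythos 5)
Your proof is correct and follows essentially the same route as the paper: write the quadratic forms as $\int_0^1 |\sum c_m x^m|^2\,d\mu$ and $\int_0^1 |\sum c_m x^m|^2(1-x^2)\,d\mu$, use $1-x^2\geq 0$ on $[0,1]$, and restrict the index set to pass to truncations. The extra remarks about the operator-valued bookkeeping and the domain of $K_\Sigma$ on $\Lambda_{N-1}$ are harmless but not needed beyond what the paper itself records.
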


\begin{proof}
For any finitely supported complex scalars $\left(c_{m}\right)$,
\[
\sum\nolimits_{m,n\geq0}\overline{c_{m}}c_{n}K\left(m,n\right)=\int^{1}_{0}\left|\sum\nolimits_{m\geq0}c_{m}x^{m}\right|^{2}d\mu\left(x\right)\geq0.
\]
Also, $K_{\Sigma}(m,n)=\int^{1}_{0}x^{m+n+2}d\mu(x)$, hence 
\begin{multline*}
\sum\nolimits_{m,n\geq0}\overline{c_{m}}c_{n}\left(K\left(m,n\right)-K_{\Sigma}\left(m,n\right)\right)=\\
\int^{1}_{0}\left|\sum\nolimits_{m\geq0}c_{m}x^{m}\right|^{2}\left(1-x^{2}\right)d\mu\left(x\right)\geq0,
\end{multline*}
so $K_{\Sigma}\leq K$. Restricting sums to $0\leq m,n\leq N-1$ gives
the truncated interior inequality. 
\end{proof}

\section{Examples}\label{sec:4}

Let $K:\Lambda_{N}\times\Lambda_{N}\rightarrow L\left(H\right)$ be
a p.d. kernel. We consider the scalar case $H=\mathbb{C}$ with $d=2$.
The index sets are $\Lambda_{1}=\left\{ 0,1,2\right\} $, $\Lambda_{2}=\left\{ 0,1,2,11,12,21,22\right\} $,
$\partial\Lambda_{2}=\left\{ 11,12,21,22\right\} $, and $\Lambda_{2}=\Lambda_{1}\cup\partial\Lambda_{2}$.
\begin{example}[Shift-consistency boundary $\Longrightarrow$ \prettyref{thm:b8}
applies]
\label{exa:d1} Work in $\mathbb{C}^{3}$ (the Kolmogorov space)
with standard orthonormal basis $\left\{ e_{0},e_{1},e_{2}\right\} $.
Set 
\[
V_{0}=e_{0},\quad V_{1}=\frac{1}{2}e_{1},\quad V_{2}=\frac{1}{2}e_{2}.
\]
Then $K\left(\alpha,\beta\right)=\left\langle V_{\alpha},V_{\beta}\right\rangle $
on $\Lambda_{1}$ gives 
\[
K|_{\Lambda_{1}\times\Lambda_{1}}=diag\left(1,1/4,1/4\right)\quad\left(\text{order }0,1,2\right).
\]

Define $T_{1}$, $T_{2}:\mathbb{C}^{3}\rightarrow\mathbb{C}^{3}$
by 
\begin{align*}
T_{1}e_{0} & =\frac{1}{2}e_{1},\quad T_{1}e_{1}=0,\quad T_{1}e_{2}=0;\\
T_{2}e_{0} & =\frac{1}{2}e_{2},\quad T_{2}e_{1}=0,\quad T_{2}e_{2}=0.
\end{align*}
Set 
\[
A:=T^{*}_{1}T_{1}+T^{*}_{2}T_{2}=diag\left(1/2,0,0\right)\leq I_{\mathbb{C}^{3}},
\]
so $\left(T_{1},T_{2}\right)$ is a row contraction on the level-1
space. 

Using $V_{\alpha i}:=T_{i}V_{\alpha}$ for $\left|\alpha\right|\leq1$,
we get 
\begin{align*}
V_{11} & =T_{1}V_{1}=0,\quad V_{12}=T_{2}V_{1}=0,\\
V_{21} & =T_{1}V_{2}=0,\quad V_{22}=T_{2}V_{2}=0,
\end{align*}
and extend $K$ to $\Lambda_{2}$. 

For $\alpha,\beta\in\Lambda_{1}$, 
\[
K_{\Sigma}\left(\alpha,\beta\right)=\sum^{2}_{i=1}K\left(\alpha i,\beta i\right)=\sum^{2}_{i=1}\left\langle T_{i}V_{\alpha},T_{i}V_{\beta}\right\rangle =\left\langle AV_{\alpha},V_{\beta}\right\rangle .
\]
Hence, for any scalars $c_{\alpha}$ supported on $\Lambda_{1}$,
\[
\sum_{\alpha,\beta}\overline{c_{\alpha}}c_{\beta}\left(K\left(\alpha,\beta\right)-K_{\Sigma}\left(\alpha,\beta\right)\right)=\left\Vert \left(I-A\right)^{1/2}\sum c_{\alpha}V_{\alpha}\right\Vert ^{2}\geq0,
\]
so $K_{\Sigma}\leq K$ on $\Lambda_{1}$. Concretely, 
\[
K|_{\Lambda_{1}\times\Lambda_{1}}=diag\left(1,1/4,1/4\right),\quad K_{\Sigma}|_{\Lambda_{1}\times\Lambda_{1}}=diag\left(1/2,0,0\right),
\]
and so the difference $diag\left(1/2,1/4,1/4\right)$ is p.d. 

By construction, 
\[
K\left(\alpha i,\beta j\right)=\left\langle T_{i}V_{\alpha},T_{j}V_{\beta}\right\rangle ,\quad K\left(\alpha,\beta i\right)=\left\langle V_{\alpha},T_{i}V_{\beta}\right\rangle ,
\]
for all $\alpha,\beta\in\Lambda_{1}$. This is the shift-consistency
condition. Hence the boundary is shift consistent. 

\textbf{Conclusion.} Since $K_{\Sigma}\leq K$ on $\Lambda_{1}$,
\prettyref{thm:3-1} yields a global positive definite extension preserving
the interior; and since the boundary is shift-consistent, \prettyref{thm:b8}
gives a global extension that also agrees with $K$ on $\Lambda_{2}\times\Lambda_{2}$. 
\end{example}

\begin{rem}
Let $H_{K}$ denote the Kolmogorov space for the truncated kernel
$K$ (defined on $\Lambda_{N}\times\Lambda_{N}$), and set
\[
H^{\left(m\right)}_{K}:=span\{V_{\gamma}:|\gamma|\le m\}\subset H_{K},\quad m=1,2.
\]

In \prettyref{exa:d1} we have $H^{\left(1\right)}_{K}=H^{\left(2\right)}_{K}=H_{K}=\mathbb{C}^{3}$,
because the length-2 vectors add no new directions. In this special
situation, a global p.d. extension that preserves the interior already
follows from the $\Lambda_{1}$ data and the inequality $K_{\Sigma}\le K$
(\prettyref{thm:3-1}); the level-2 block plays no role for mere existence.
The additional shift-consistency requirement is only needed if one
further demands boundary agreement on $\Lambda_{2}\times\Lambda_{2}$
(\prettyref{thm:b8} ).

By contrast, when $H^{\left(2\right)}_{K}\supsetneq H^{\left(1\right)}_{K}$
the boundary introduces genuinely new directions, and the shift-consistency
condition becomes a substantive compatibility constraint for achieving
boundary agreement.
\end{rem}

\begin{example}[Not shift-consistent $\Longrightarrow$ \prettyref{thm:b8} not applicable]
\label{exa:d2}  Keep the scalar case $H=\mathbb{C}$, $d=2$, with
index sets $\Lambda_{1}$, $\partial\Lambda_{2}$, $\Lambda_{2}=\Lambda_{1}\cup\partial\Lambda_{2}$
as above.

Work in $\mathbb{C}^{4}$ with orthonormal basis $\{e_{0},e_{1},e_{2},e_{3}\}$.
Set 
\[
V_{0}=e_{0},\quad V_{1}=\tfrac{1}{2}e_{1},\quad V_{2}=\tfrac{1}{2}e_{2},
\]
so 
\[
K|_{\Lambda_{1}\times\Lambda_{1}}=diag\left(1,1/4,1/4\right)\quad\text{(order }(0,1,2)\text{)}.
\]

Define 
\[
V_{11}=0,\quad V_{21}=0,\quad V_{22}=0,\quad V_{12}=\tfrac{1}{4}e_{3}.
\]
Extend $K$ to $\Lambda_{2}$ by the Gram rule $K(\gamma,\delta)=\langle V_{\gamma},V_{\delta}\rangle$.

Then we have 
\[
\begin{aligned} & K_{\Sigma}(0,0)=K(1,1)+K(2,2)=\tfrac{1}{4}+\tfrac{1}{4}=\tfrac{1}{2},\\
 & K_{\Sigma}(1,1)=K(11,11)+K(12,12)=0+\|V_{12}\|^{2}=\tfrac{1}{16},\\
 & K_{\Sigma}(2,2)=K(21,21)+K(22,22)=0+0=0,\\
 & K_{\Sigma}(1,2)=K(11,12)+K(12,22)=0,\qquad K_{\Sigma}(0,1)=K(1,11)+K(2,12)=0,\\
 & K_{\Sigma}(0,2)=K(1,21)+K(2,22)=0.
\end{aligned}
\]
Hence (order $(0,1,2)$) 
\[
K_{\Sigma}|_{\Lambda_{1}\times\Lambda_{1}}=diag\left(1/2,1/16,0\right),\quad K|_{\Lambda_{1}\times\Lambda_{1}}-K_{\Sigma}|_{\Lambda_{1}\times\Lambda_{1}}=diag\left(1/2,3/16,1/4\right)\geq0.
\]
Thus $K_{\Sigma}\le K$ on $\Lambda_{1}$, and by \prettyref{thm:3-1}
a global p.d. extension exists that preserves the interior.

Shift-consistency (\prettyref{def:2-5}) would require operators $T_{1},T_{2}$
on the level-1 space 
\[
H^{\left(1\right)}_{K}=span\left\{ V_{0},V_{1},V_{2}\right\} =span\left\{ e_{0},e_{1},e_{2}\right\} .
\]
Since $V_{12}=\frac{1}{4}e_{3}$ is orthogonal to $H^{\left(1\right)}_{K}$,
condition \eqref{eq:b2}, with $\beta=1$ and $i=2$, gives 
\[
\left\langle V_{\alpha},T_{2}V_{1}\right\rangle =K\left(\alpha,12\right)=\left\langle V_{\alpha},V_{12}\right\rangle =0
\]
for every $\alpha\in\Lambda_{1}$. Hence 
\[
T_{2}V_{1}=0.
\]
On the other hand, condition \eqref{eq:b3}, with $\alpha=\beta=1$
and $i=j=2$, gives 
\[
\left\Vert T_{2}V_{1}\right\Vert ^{2}=K\left(12,12\right)=\frac{1}{16},
\]
a contradiction. Therefore the level-2 boundary is not shift-consistent,
and \prettyref{thm:b8} is not applicable (we make no claim about
the existence or nonexistence of a boundary-preserving extension).

Note that, with these choices 
\[
H^{\left(1\right)}_{K}=span\{e_{0},e_{1},e_{2}\}\subsetneq H^{\left(2\right)}_{K}=span\{e_{0},e_{1},e_{2},e_{3}\},
\]
so the level-2 data introduce a new direction. 
\end{example}

\bibliographystyle{amsalpha}
\bibliography{ref}

\end{document}